\newcommand{\argmin}{\operatorname{arg}\min}
\newtheorem{thm}{Theorem}
\newtheorem{rem}{Remark}
\title{\LARGE \bf
Accelerated Nonconvex ADMM with Self-Adaptive Penalty for Rank-Constrained Model Identification
}
\author{Qingyuan Liu, Zhengchao Huang, Hao Ye, Dexian Huang, and Chao Shang, \IEEEmembership{Member, IEEE}
\thanks{*This work was supported by National Natural Science Foundation of China under Grant 62003187. \textit{(Corresponding author: C. Shang)}}
\thanks{Q. Liu and Z. Huang are with Department of Automation, Tsinghua University, Beijing 100084, China. E-mail: {\tt\small \{lqy20,huang-zc19\}@mails.tsinghua.edu.cn}}%
\thanks{H. Ye, D. Huang, and C. Shang are with Department of Automation, BNRist, Tsinghua University, Beijing 100084, China. E-mail: {\tt\small \{haoye,huangdx,c-shang\}@tsinghua.edu.cn}}%
}
\begin{document}
\renewcommand{\algorithmicrequire}{\textbf{Input:}}
\renewcommand{\algorithmicensure}{\textbf{Output:}}
\newcommand{\bZ}{\mathbf{Z}}
\newcommand{\bz}{\mathbf{z}}
\newcommand{\be}{\mathbf{e}}
\newcommand{\tbz}{\tilde{\mathbf{z}}}
\newcommand{\bLambda}{\mathbf{\Lambda}}
\newcommand{\blambda}{\boldsymbol{\lambda}}
\newcommand{\bmu}{\boldsymbol{\mu}}
\newcommand{\Lb}{\mathcal{L}_\beta}
\newcommand{\bxi}{\boldsymbol{\xi}}
\newcommand{\bPhi}{\mathbf{\Phi}}
\newcommand{\btheta}{\boldsymbol{\theta}}

\maketitle
\thispagestyle{empty}
\pagestyle{empty}

\begin{abstract}
The alternating direction method of multipliers (ADMM) has been widely adopted in low-rank approximation and low-order model identification tasks; however, the performance of nonconvex ADMM is highly reliant on the choice of penalty parameter. To accelerate ADMM for solving rank-constrained identification problems, this paper proposes a new self-adaptive strategy for automatic penalty update. Guided by first-order analysis of the increment of the augmented Lagrangian, the self-adaptive penalty updating enables effective and balanced minimization of both primal and dual residuals and thus ensures a stable convergence. Moreover, improved efficiency can be obtained within the Anderson acceleration scheme. Numerical examples show that the proposed strategy significantly accelerates the convergence of nonconvex ADMM while alleviating the critical reliance on tedious tuning of penalty parameters.
\end{abstract}

\section{Introduction}
In the recent decade, the idea of structured low-rank matrix approximation has stimulated a continually growing amount of work in dynamical system identification \cite{grossmann2009system, markovsky2013, liu2013nuclear, verhaegen2016n2sid, liu2021efficient, liu2023efficient}. By exploiting the low-rank property of data matrices germane to system order, one is capable of identifying low-order models directly from data. Its prevalence arises from the fact that, for model-based control design, it suffices to capture dominating process dynamics using a parsimonious structure \cite{seborg2010process}. Besides, the reduced complexity renders the model less susceptible to noise-corruption and alleviates over-fitting.


For the sake of computational tractability, a plenty of low-rank approximation methods resolve relaxed problems by means of the nuclear norm heuristic \cite{fazel2001rank,liu2010interior,recht2010guaranteed}. Although only convex programs need to be solved, these methods more or less suffer from performance loss due to relaxation; see \cite{mohan2010reweighted} for detailed discussions. The rank-constrained program (RCP) offers a more straightforward approach, which enforces strict satisfaction of rank constraints and thus helps to reduce approximation error. RCP has already found successful applications in various data-driven modeling tasks including continuous-time model identification \cite{liu2021efficient,liu2023efficient}, structured subspace identification \cite{yu2018identification,yu2019constrained},  factor analysis \cite{delgado2014rank}, to mention a few.

Because of the nonconvexity of rank constraint, RCP is NP-hard and notoriously difficult to solve. Current solution algorithms can be primarily classified into convex-concave procedure and its variants, which solve relaxed problems successively \cite{mohan2010reweighted,sun2017rank,yu2019constrained}, and alternating projection-based methods, which split the optimization problem into subproblems and solve them alternately. As a representative of the latter, the alternating direction method of multipliers (ADMM) has drawn immense attentions in resolving RCP due to its capability of starting from an infeasible solution and the flexibility in handling different problem structures \cite{boyd2011distributed,sun2017customized,wu2022maximum}. However, ADMM is known to suffer from slow convergence to a local optimum, especially in the final stage. Even though the convergence of ADMM has been proved for specific nonconvex problems, it either relies on strong assumptions that are nontrivial for RCP, or introduces additional approximations \cite{sun2017customized,wang2019global,zhang2019accelerating}. The crux is that, the performance of nonconvex ADMM heavily hinges on the penalty parameter, whose fine-tuning is cumbersome. As a rule-of-thumb, the penalty parameter shall be set small at the initial stage to escape from local optima, and then increased continually to enforce feasibility. However, such an updating strategy is susceptible to selection of hyper-parameters, which may lead to poor convergence (e.g. an imbalanced reduction of primal and dual residuals), or even divergent behaviors.

In this work, a novel self-adaptive penalty updating strategy for nonconvex ADMM is proposed to solve RCP problems in low-rank model identification. Motivated by convergence criterion of nonconvex ADMM, we propose to carry out first-order analysis of the increment of the augmented Lagrangian, which acts as an information carrier to effectively guide the automatic update of penalty parameter. In addition, by regarding ADMM as fixed-point iterations, the proposed self-adaptive strategy can be implemented within the Anderson acceleration scheme to further improve the convergence performance. Numerical experiments show that the proposed accelerated ADMM routine yields desirable convergent performance and manifests insensitivity towards initialization of penalty, thereby offering ease of implementation in face of real-world problems.

The layout of this paper is organized as follows. Section II revisits the RCP formulation of low-order system identification problems and the ADMM routine. Section III describes the new self-adaptive penalty updating strategy for nonconvex ADMM and its implementation in the Anderson acceleration scheme. Section IV showcases results of numerical examples, followed by final concluding remarks.

\textit{Notations:} The operators ${\rm rank}\{ \mathbf{A} \} $, $\mathbf{A}^\top$ and $|| \mathbf{A} ||_F$ denote the rank, the transpose and the Frobenius norm of a matrix $\mathbf{A}$, respectively. $\operatorname{vec}(\mathbf{A})$ denotes the vector formed by concatenating each column of matrix $\mathbf{A}$. $\mathbf{A}\circ\mathbf{B}$ denotes the element-wise product of two matrices of equal size. The identity matrix and the vector of all ones are denoted as $\mathbf{I}$ and $\mathbf{1}$, whose dimensions can be deemed from the context. The Hankel matrix constructed from a vector $\mathbf{x} \in \mathbb{R}^N$ with $n$ columns is defined as the following operator:
\begin{equation*}
\mathcal{H}_n(\mathbf{x}) = \begin{bmatrix}
x_1 & x_2 & \cdots & x_n \\
x_2 & x_3 & \cdots & x_{n+1} \\
\vdots & \vdots & \ddots & \vdots \\
x_{N-n+1} & x_{N-n+2} & \cdots & x_N
\end{bmatrix}.
\end{equation*}

\section{Rank-Constrained Identification of Low-Order Dynamical Models}
Consider the following single-input single-output discrete-time linear system:
\begin{equation}
\label{eq:dt_model}
y(t) = G(q)u(t)+v(t),
\end{equation}
where $t$ is the time index, $q$ is the forward shift operator, and $u(t)$, $y(t)$, $v(t)$ denote the input, output and additive noise at time $t$. The transfer function $G(q)$ could be expressed as $G(q)=\sum_{k=1}^\infty g_k q^{-1}$, where $\{ g_k \}_{k=1}^{\infty}$ denote the impulse response (IR). As the IR of a stable system decays to zero, truncation becomes a simple yet viable approach that gives rise to the finite impulse response (FIR) model $G(q)=\sum_{k=1}^l g_k q^{-1}$,
where $l$ is the length of FIR. Defining $\boldsymbol{\theta}=[g_1,\cdots,g_l]^\top\in\mathbb{R}^l$, model \eqref{eq:dt_model} can be approximated as:
\begin{equation}
    y(t)=\boldsymbol{\phi}(t)^\top\boldsymbol{\theta}+v(t),
    \label{eq:3}
\end{equation}
where $\boldsymbol{\phi}(t)=[u(t-1),\cdots,u(t-l)]^\top$. Given $N$ samples pairs $\{ \boldsymbol{\phi}(t), y(t) \}_{t=1}^N$, \eqref{eq:3} can be stacked in a vector form $\mathbf{y}=\mathbf{\Phi}\boldsymbol{\theta}+\mathbf{v}$, where $\mathbf{y}=[y(1),\cdots,y(N)]^\top$,
$\mathbf{\Phi}=[\boldsymbol{\phi}(1),\cdots,\boldsymbol{\phi}(N)]^\top$, and
$\mathbf{v}=[v(1),\cdots,v(N)]^\top$. The classical least-square estimate of $\boldsymbol{\theta}$ is then given by
\begin{equation}
\hat{\boldsymbol{\theta}}_\mathrm{LS}=\argmin_{\boldsymbol{\theta}} \|\mathbf{y}-\mathbf{\Phi}\boldsymbol{\theta}\|^2=(\bPhi^\top\bPhi)^{-1}\bPhi^\top\mathbf{y}.
\end{equation}
However, under heavy noise and inadequate sample size, least-square solution tends to be ill-condition. A remedy is given by the kernel-based methods, which control model complexity with the regularized least-square problem \cite{pillonetto2014kernel}:
\begin{equation}
\hat{\boldsymbol{\theta}}_\mathrm{ReLS}=\argmin_{\boldsymbol{\theta}} \|\mathbf{y}-\mathbf{\Phi}\boldsymbol{\theta}\|^2+\gamma \cdot \btheta^\top\mathbf{K}^{-1}\btheta,
\end{equation}
where $\gamma > 0$ is the tuning parameter and $\mathbf{K}$ is the kernel matrix constructed based on prior knowledge of impulse response, such as smoothness and exponential decay.

More recently, a new RCP perspective alternative to kernel-based methods has been cultivated in system identification. For a low-order model, its impulse response $\btheta$ typically embodies a low-rank structure, which is useful for controlling model complexity in a straightforward manner. More precisely, given a model with known order $r$, the Hankel matrix $\mathcal{H}_n(\btheta)$ has an exact rank of $r$ provided $n>r$, which motivates the following RCP formulation \cite{liu2021efficient,liu2023efficient}:

\begin{equation}
\label{eq:rcp0}
\begin{aligned}
\hat{\btheta}_\mathrm{RCP}=\argmin_{\btheta}\ & \left\|\mathbf{y}-\bPhi\btheta\right\|^2 \\
{\rm s.t. }\ & \operatorname{rank}\left\{\mathcal{H}_n(\btheta)\right\}=r
\end{aligned}
\end{equation}
By solving \eqref{eq:rcp0}, a low-order $G(q)$ can be exactly recovered from $\hat{\btheta}_\mathrm{RCP}$.\footnote{This property also holds for continuous-time transfer function $G(s)$.} For example, the RCP-based identification of first-order and second-order plus time delay (FOTD \& SOTD) models was considered in \cite{liu2021efficient} and its advantage in relay feedback controller tuning was further exploited in \cite{liu2023efficient}. The ADMM has been a popular approach to solving nonconvex RCP \cite{boyd2011distributed,sun2017customized}. By defining auxiliary variables $\mathbf{e}\in\mathbb{R}^N$ and $\mathbf{Z}\in\mathbb{R}^{(l+1-n)\times n}$, the RCP in \eqref{eq:rcp0} is reformulated as:
\begin{equation}
\label{eq:rcp}      
\begin{aligned}
\min_{\btheta,\mathbf{Z},\mathbf{e}}\ & \left\|\mathbf{e}\right\|^2\\
{\rm s.t. }\ & \mathbf{Z}+\mathcal{H}_n(\btheta)=\mathbf{0},~ \mathbf{e}+\bPhi\btheta-\mathbf{y}=\mathbf{0}\\
& \operatorname{rank}\left\{\mathbf{Z}\right\}=r\\
\end{aligned}
\end{equation}
The corresponding augmented Lagrangian ignoring the rank constraint is given by:
\begin{equation}
\label{eq:lagrange}
\begin{aligned}
&~ \mathcal{L}(\mathbf{Z},\mathbf{e},\btheta,\bLambda,\blambda;\beta) \\
=&~ \left\|\mathbf{e}\right\|^2-\blambda^{\top}\left(\mathbf{e}+\bPhi\btheta-\mathbf{y}\right)-\left\langle\bLambda, \mathbf{Z}+\mathcal{H}_n(\btheta)\right\rangle\\
&+\frac{\beta}{2}\left\|\mathbf{e}+\bPhi\btheta-\mathbf{y}\right\|^2+\frac{\beta}{2}\left\|\mathbf{Z}+\mathcal{H}_n(\btheta)\right\|_F^2,
\end{aligned}
\end{equation}
where $\{ \blambda\in\mathbb{R}^{N\times 1}, \bLambda\in\mathbb{R}^{(l+1-n)\times n} \}$ are dual variables and $\beta > 0$ is the penalty. Then the parameters are updated in a $\mathbf{Z}\to\mathbf{e}\to\boldsymbol{\theta}\to\bLambda\to\blambda$ order. Therein the updates of $\mathbf{e}$ and $\mathbf{Z}$ are given by:
\begin{gather}
\label{eq:e_update}
\mathbf{e}^{k+1}=\argmin_\mathbf{e}\left\|\mathbf{e}\right\|^2+\frac{\beta}{2}\left\|\mathbf{e}+\bPhi\btheta^k-\mathbf{y}-\frac{\blambda^k}{\beta}\right\|^2\\
\label{eq:Z_update}
\begin{aligned}
\mathbf{Z}^{k+1}=\argmin_\mathbf{Z}\ & \left\|\mathbf{Z}+\mathcal{H}_n(\btheta^k)-\frac{\bLambda^k}{\beta}\right\|_F^2\\
{\rm s.t. }\ & \operatorname{rank}\left\{\mathbf{Z}\right\}=r
\end{aligned}
\end{gather}
Problem \eqref{eq:e_update} admits a least-square solution. Without loss of generality, $l+1-n\geq n$ is assumed and thus the solution of \eqref{eq:Z_update} is given by the following truncated SVD:
\begin{equation}
\mathbf{Z}^{k+1}=\mathbf{U}^{k}\left(\tilde{\mathbf{I}}_r\circ\mathbf{\Sigma}^{k}\right)(\mathbf{V}^k)^\top,
\end{equation}
where matrices $\mathbf{U}\in\mathbb{R}^{(l+1-n)\times n}$, $\mathbf{\Sigma} = {\rm diag} \{ \sigma_1, \cdots, \sigma_n \} \in\mathbb{R}^{n\times n}$, and $\mathbf{V}\in\mathbb{R}^{n\times n}$ are derived from the SVD:
\begin{equation}
\label{eq:svd}
 -\mathcal{H}_n(\boldsymbol{\theta}^k)+{\frac{{\mathbf{\Lambda}}^k}{\beta^k}}  = \mathbf{U}^{k}\mathbf{\Sigma}^{k}(\mathbf{V}^k)^\top,
\end{equation}
and the diagonal matrix
\begin{equation}
\tilde{\mathbf{I}}_r=\mathrm{diag}\{\underbrace{1,\cdots,1}_{r},\underbrace{0,\cdots,0}_{n-r}\}\in\mathbb{R}^{n\times n}
\end{equation}
is used to ``pick out" $r$ largest singular values from $\mathbf{\Sigma}$ and helps preserve a low-rank structure. 

Different from the common two-block convex ADMM, which possesses appealing convergence property, the convergence performance of nonconvex ADMM heavily depends on the selection of penalty parameter $\beta$ \cite{wang2019global}. Among various applications of nonconvex ADMM, there are two primary options for deciding $\beta$ during iterations. One can trivially set $\beta$ to be a sufficiently large constant \cite{sun2017customized, liu2019linearized, wu2022maximum}, as implied by the established convergence results for nonconvex ADMM \cite{wang2019global}. However, an excessively large $\beta$ negatively impacts the convergence, especially the reduction of dual residuals. Another heuristic, which is referred to as \textit{multiplicative updating}, is to increase $\beta$ by a factor of $\rho > 1$ successively until a maximum $\beta_{\rm max}$ is reached or the residuals satisfy certain convergence condition \cite{chan2017plug, sun2021two, wang2021distributed}. Its rationality lies in that using a small $\beta$ at the initial stage helps to circumvent local optima. However, the values of hyper-parameters such as $\rho$ and $\beta_{\rm max}$ exert significant influence on the convergence and thus entail tedious fine-tuning.

\section{Accelerated Nonconvex ADMM For Rank-Constrained Model Identification}
In this section, a new penalty updating strategy for rank-constrained nonconvex ADMM is proposed by performing first-order analysis of the increment of augmented Lagrangian $\mathcal{L}$, which is then integrated with the Anderson acceleration scheme for improved efficiency. Before proceeding, we make some simplifications to facilitate subsequent analysis. In fact, the updates of $\mathbf{Z}$ and $\mathbf{e}$ are independent with each other and thus can be merged. Meanwhile, the updates of $\bLambda$ and $\blambda$ are also decoupled. Based on this, the following concatenated variables are introduced:
\begin{equation}
\mathbf{w}=\begin{bmatrix}
\text{vec}(\bZ)\\
\be
\end{bmatrix},\ 
\bmu=\begin{bmatrix}
\text{vec}(\bLambda)\\
\blambda
\end{bmatrix}.
\end{equation}
Then the RCP \eqref{eq:rcp} can be compactly expressed as:
\begin{equation}
\label{eq:rcp2}
\begin{aligned}
\min _{\mathbf{w},\btheta} &~ \|\mathbf{e}\|^2 \\
{\rm s.t. }&~ \mathbf{w}+\mathbf{Q}\btheta+\tilde{\mathbf{y}}=\mathbf{0}\\
&~ \operatorname{rank}\left\{\mathbf{Z}\right\}=r
\end{aligned}
\end{equation}
where
\begin{equation}
\mathbf{Q}=\begin{bmatrix}
\mathbf{M}\\
\bPhi
\end{bmatrix},\ \tilde{\mathbf{y}}=\begin{bmatrix}
\mathbf{0}\\
-\mathbf{y}
\end{bmatrix},
\end{equation}
and the coefficient matrix $\mathbf{M}$ is defined in order to fulfill $\mathbf{M}\mathbf{z}=\operatorname{vec}(\mathcal{H}_n(\mathbf{z}))$. Then the augmented Lagrangian $\mathcal{L}$ could be rewritten as:
\begin{equation}
\label{eq:lb}
\begin{aligned}
&\mathcal{L}(\mathbf{w},\btheta,\bmu;\beta) \\
=&~ \|\mathbf{e}\|^2-\bmu^\top\left(\mathbf{w}+\mathbf{Q}\btheta+\tilde{\mathbf{y}}\right) +\frac{\beta}{2}\left\|\mathbf{w}+\mathbf{Q}\btheta+\tilde{\mathbf{y}}\right\|^2,
\end{aligned}
\end{equation}
Consequently, the ADMM iterations can be summarized as follows:
\begin{gather}
\label{eq:p_update}
\begin{aligned}
\mathbf{w}^{k+1}=\argmin_\mathbf{w}&~\mathcal{L}(\mathbf{w},\btheta^k,\bmu^k;\beta^k)\\
{\rm s.t.} &~ \operatorname{rank}\left\{\mathbf{Z}\right\}=r
\end{aligned}\\
\label{eq:q_update}
\btheta^{k+1}=\argmin_{\btheta}\mathcal{L}(\mathbf{w}^{k+1},\btheta,\bmu^k;\beta^k)\\
\label{eq:w_update}
\bmu^{k+1}=\bmu^k-\beta^k\left(\mathbf{w}^{k+1}+\mathbf{Q}\btheta^{k+1}+\tilde{\mathbf{y}}\right)
\end{gather}

\subsection{Self-adaptive penalty in nonconvex ADMM}
The proposed updating strategy of penalty parameter is inspired from the descent property of $\mathcal{L}$, which is pivotal for establishing the convergence of nonconvex ADMM towards a stationary point \cite[Property 2]{wang2019global}. The increment of $\mathcal{L}$ in the $k$-th iteration reads as:
\begin{equation}
\label{eq:Delta_L}
\Delta\mathcal{L}^k \triangleq \mathcal{L}(\mathbf{w}^{k+1},\btheta^{k+1},\bmu^{k+1};\beta^k)-\mathcal{L}(\mathbf{w}^k,\btheta^k,\bmu^k;\beta^k).
\end{equation}
Given $\mathbf{w}^k$, $\btheta^k$, $\bmu^k$ and $\beta^k$, new updates $\mathbf{w}^{k+1}$, $\btheta^{k+1}$, and $\bmu^{k+1}$ can be uniquely determined. Thus, $\Delta\mathcal{L}^k$ can be thought of a function of $\{ \mathbf{w}^k,\btheta^k,\bmu^k,\beta^k\}$. Notably, its partial derivative $\partial\Delta\mathcal{L}^k / \partial{\beta^k}$ contains useful information about the impact of varying $\beta^k$ on $\Delta\mathcal{L}^k$. When $\partial\Delta\mathcal{L}^k / \partial{\beta^k}<0$, it indicates that a larger $\beta^k$ is expected to make the augmented Lagrangian possess a better descending characteristic. When $\partial\Delta\mathcal{L}^k / \partial{\beta^k}>0$, a smaller $\beta^k$ is suggested. This motivates the following heuristic for updating $\beta$ in nonconvex ADMM:
\begin{equation}
\label{eq:beta_update}
\beta^{k+1}= \left \{ \begin{aligned}
& \beta^k\cdot\rho_\text{inc},~~ \frac{\partial \Delta\mathcal{L}^k}{\partial{\beta^k}}<0, \\
& \beta^k/\rho_\text{dec},~~ \frac{\partial \Delta\mathcal{L}^k}{\partial{\beta^k}}>0,\\
& \beta^k,~~~~~~~~ \frac{\partial \Delta\mathcal{L}^k}{\partial{\beta^k}}=0,
\end{aligned} \right .
\end{equation}
where $\rho_\text{inc}>\rho_\text{dec}>1$ are, respectively, the increase and decrease factors of $\beta$.\footnote{The reason for setting $\rho_\mathrm{inc}>\rho_\mathrm{dec}$ lies in that, a small $\beta$ may lead to divergence, so one needs to be more cautious in decreasing the value of $\beta$.}

\begin{rem}
\label{rem1}
The above heuristic is reminiscent of the well-known residual-based adaptive penalty updating policy for convex problems, which is based on inspecting the relation between primal and dual residuals \cite{he2000alternating, boyd2011distributed}:
\begin{equation}
\label{eq:22}
\beta^{k+1}= \left \{ \begin{aligned}
&\beta^{k}\cdot\rho_\mathrm{inc},~~\left\|\boldsymbol{\varepsilon}^{k}_\mathrm{p}\right\|^2>\kappa\left\|\boldsymbol{\varepsilon}^{k}_\mathrm{d}\right\|^2, \\
&\beta^{k}/\rho_\mathrm{dec},~~\left\|\boldsymbol{\varepsilon}^{k}_\mathrm{d}\right\|^2>\kappa\left\|\boldsymbol{\varepsilon}^{k}_\mathrm{p}\right\|^2, \\
&\beta^k,~~~~~~~~~\text {otherwise},
\end{aligned}\right.
\end{equation}
where primal and dual residuals $\boldsymbol{\varepsilon}^k_\mathrm{p}$ and $\boldsymbol{\varepsilon}^k_\mathrm{d}$ are defined by
\begin{gather}
\boldsymbol{\varepsilon}_\mathrm{p}^{k+1}=\mathbf{w}^{k+1}+\mathbf{Q}\boldsymbol{\theta}^{k+1}+\tilde{\mathbf{y}},\\
\boldsymbol{\varepsilon}_\mathrm{d}^{k+1}=\beta^k\mathbf{Q}(\btheta^{k+1}-\btheta^k),
\end{gather}
and $\kappa>1$ is a threshold parameter. Notwithstanding its effectiveness on a variety of convex problems, the updating rule \eqref{eq:22} still has some pitfalls. In essence, such a strategy takes $\left\|\boldsymbol{\varepsilon}^{k}_\mathrm{p}\right\|^2+\left\|\boldsymbol{\varepsilon}^{k}_\mathrm{d}\right\|^2$ as an indicator of convergence and the update of $\beta$ is oriented towards equal primal and dual residuals. However, recent studies have pointed out that there is no theoretical guarantee that identical primal and dual residuals are optimal for the convergence \cite{wohlberg2017admm}, which will also be evidenced in case study.
\end{rem}

The key challenge of executing the self-adaptive strategy consists in the calculation of $\partial\Delta\mathcal{L}^k / \partial{\beta^k}$. Delving into ADMM iterations \eqref{eq:p_update}-\eqref{eq:w_update}, we have the following supporting result.
\begin{thm}
\label{lemma1}
The following relations hold during iterations of ADMM:
\begin{gather}
\label{eq:Mw=0}
\mathbf{Q}^\top\bmu^{k+1}=\mathbf{0},\\
\label{eq:q_p}
\btheta^{k+1}=-(\mathbf{Q}^\top\mathbf{Q})^{-1}\mathbf{Q}^\top\left(\mathbf{w}^{k+1}+\tilde{\mathbf{y}}\right).
\end{gather}
\end{thm}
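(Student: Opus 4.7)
The plan is to extract both identities directly from the first-order optimality condition for the $\btheta$-subproblem together with the explicit form of the dual update. Since $\mathcal{L}$ is quadratic and strongly convex in $\btheta$ (the Hessian is $\beta^k\mathbf{Q}^\top\mathbf{Q}$, assumed invertible by construction of $\mathbf{Q}$), the minimizer in \eqref{eq:q_update} is characterized by $\nabla_\btheta\mathcal{L}(\mathbf{w}^{k+1},\btheta^{k+1},\bmu^k;\beta^k)=\mathbf{0}$. Differentiating \eqref{eq:lb} yields
\begin{equation*}
-\mathbf{Q}^\top\bmu^k+\beta^k\mathbf{Q}^\top\!\left(\mathbf{w}^{k+1}+\mathbf{Q}\btheta^{k+1}+\tilde{\mathbf{y}}\right)=\mathbf{0}.
\end{equation*}
This is the single identity from which everything else follows.

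For \eqref{eq:Mw=0}, I would left-multiply the dual update \eqref{eq:w_update} by $\mathbf{Q}^\top$ to obtain
\begin{equation*}
\mathbf{Q}^\top\bmu^{k+1}=\mathbf{Q}^\top\bmu^k-\beta^k\mathbf{Q}^\top\!\left(\mathbf{w}^{k+1}+\mathbf{Q}\btheta^{k+1}+\tilde{\mathbf{y}}\right),
\end{equation*}
and recognize that the second term on the right-hand side is exactly $\mathbf{Q}^\top\bmu^k$ by the optimality condition above. The two terms cancel, giving $\mathbf{Q}^\top\bmu^{k+1}=\mathbf{0}$.

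For \eqref{eq:q_p}, I would use \eqref{eq:Mw=0} at the previous index, i.e.\ $\mathbf{Q}^\top\bmu^k=\mathbf{0}$, which holds for all $k\geq 1$ by the argument just given (and can be assumed at $k=0$ under the standard initialization $\bmu^0=\mathbf{0}$). Substituting this into the optimality condition reduces it to
\begin{equation*}
\mathbf{Q}^\top\mathbf{Q}\,\btheta^{k+1}=-\mathbf{Q}^\top\!\left(\mathbf{w}^{k+1}+\tilde{\mathbf{y}}\right),
\end{equation*}
and inverting $\mathbf{Q}^\top\mathbf{Q}$ delivers \eqref{eq:q_p}. The only real care needed in the argument is handling the base case of the induction on $k$; apart from that, the proof is a short algebraic manipulation with no genuine obstacle, which is why I expect to write it as a one-step derivation followed by a remark on initialization rather than a lengthy chain of estimates.
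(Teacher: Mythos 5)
Your proof is correct and follows essentially the same route as the paper: the stationarity condition of the $\btheta$-subproblem (which the paper simply writes in solved, closed form) is substituted into the dual update so that the two terms cancel and give \eqref{eq:Mw=0}, which is then fed back into that same condition to obtain \eqref{eq:q_p}. Your extra care with the base case is the only difference, and it is a point the paper glosses over --- its substitution of \eqref{eq:Mw=0} into the closed-form $\btheta$-update strictly establishes \eqref{eq:q_p} only for $k\geq 1$ unless $\mathbf{Q}^\top\bmu^0=\mathbf{0}$ is assumed at initialization.
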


\begin{proof}
In \eqref{eq:q_update}, the update of $\btheta$ admits the following closed-form solution:
\begin{equation}
\label{eq:q_update_solution}
\btheta^{k+1}=-(\mathbf{Q}^\top\mathbf{Q})^{-1}\mathbf{Q}^\top\left(\mathbf{w}^{k+1}+\tilde{\mathbf{y}}-\frac{\bmu^k}{\beta^k}\right),~\forall k\geq 0.
\end{equation}
Plugging it into \eqref{eq:w_update} and left-multiplying $\mathbf{Q}^\top$ on both sides, we arrive at \eqref{eq:Mw=0}. Substituting \eqref{eq:Mw=0} into \eqref{eq:q_update_solution} then yields \eqref{eq:q_p}.
\end{proof}

Now we are in the position to describe the derivation of $\partial\Delta\mathcal{L}^k / \partial{\beta^k}$. Using Theorem \ref{lemma1}, the increment of $\mathcal{L}$ can be rewritten as:
\begin{equation}
\begin{aligned}
&\Delta\mathcal{L}^k=\|\mathbf{e}^{k+1}\|^2-\|\mathbf{e}^k\|^2\\
&-\left[\bmu^k-\beta^k\left(\mathbf{w}^k+\mathbf{Q}\btheta^k+\tilde{\mathbf{y}}\right)\right]^\top\left(\mathbf{w}^{k+1}-\mathbf{w}^k\right)\\
&+\beta^k\left[\left\|\mathbf{P}(\mathbf{w}^{k+1}-\mathbf{w}^k)\right\|^2/2+\left\|\mathbf{P}(\mathbf{w}^{k+1}+\tilde{\mathbf{y}})\right\|^2\right],
\end{aligned}
\end{equation}
where $\mathbf{P}=\mathbf{I}-\mathbf{Q}\left(\mathbf{Q}^\top\mathbf{Q}\right)^{-1}\mathbf{Q}^\top$. In this way, $\btheta^{k+1}$ and $\bmu^{k+1}$ can be eliminated. One then obtains:
\begin{equation}
\begin{aligned}
&\frac{\partial\Delta\mathcal{L}^k}{\partial{\beta^k}}=
\left (\frac{\partial\mathbf{w}^{k+1}}{\partial{\beta^k}} \right )^\top{\left[\begin{bmatrix}
2\mathbf{e}^{k+1}\\
\mathbf{0}
\end{bmatrix}-\bmu^k\right.}\\
&\left.+\beta^k\left(\mathbf{w}^k+\mathbf{Q}\btheta^k+\tilde{\mathbf{y}}+\mathbf{P}^\top\mathbf{P}(3\mathbf{w}^{k+1}-\mathbf{w}^k+2\tilde{\mathbf{y}})\right)\right]\\
&+(\mathbf{w}^k+\mathbf{Q}\boldsymbol{\theta}^k+\tilde{\mathbf{y}})^\top(\mathbf{w}^{k+1}-\mathbf{w}^k)\\
&+\left\|\mathbf{P}(\mathbf{w}^{k+1}-\mathbf{w}^k)\right\|^2/2+\left\|\mathbf{P}(\mathbf{w}^{k+1}+\tilde{\mathbf{y}})\right\|^2.
\end{aligned}
\end{equation}
Clearly, only $\partial\mathbf{w}^{k+1} / \partial{\beta^k}$ needs to be computed, which is defined as:
\begin{equation}
\frac{\partial \mathbf{w}^{k+1}}{\partial{\beta^k}}=\begin{bmatrix}
\mathrm{vec}\left( \frac{\partial \mathbf{Z}^{k+1}}{\partial{\beta^k}}\right)\\
\frac{\partial \mathbf{e}^{k+1}}{\partial{\beta^k}}
\end{bmatrix}.
\end{equation}
We first deal with the computation of $\partial \mathbf{Z}^{k+1} / \partial{\beta^k}$. Because $\mathbf{Z}^{k+1}=\mathbf{U}^{k}(\tilde{\mathbf{I}}_r\circ\mathbf{\Sigma}^{k})(\mathbf{V}^k)^\top$, its derivative can be derived as
\begin{equation}
\begin{aligned}
\frac{\partial \mathbf{Z}^{k+1}}{\partial{\beta^k}}=&\frac{\partial \mathbf{U}^{k}}{\partial{\beta^k}}\left(\tilde{\mathbf{I}}_r\circ\mathbf{\Sigma}^k\right)(\mathbf{V}^k)^\top+\mathbf{U}^k\left(\tilde{\mathbf{I}}_r\circ \frac{\partial \mathbf{\Sigma}^k}{\partial{\beta^k}}\right)(\mathbf{V}^k)^\top\\
&+\mathbf{U}^k\left(\tilde{\mathbf{I}}_r\circ\mathbf{\Sigma}^k\right)\frac{\partial (\mathbf{V}^k)^\top}{\partial{\beta^k}},
\end{aligned}
\end{equation}
which involves the derivatives of $\mathbf{U}^k$, $\mathbf{\Sigma}^k$ and $\mathbf{V}^k$. Recalling that $\mathbf{U}^k\mathbf{\Sigma}^k(\mathbf{V}^k)^\top=-\mathcal{H}_n(\mathbf{z}^k)+{\mathbf{\Lambda}^k}/{{\beta^k}}$, we can attain these derivatives using matrix calculus of SVD \cite{bodewig2014matrix}:
\begin{equation}
\begin{split}
\frac{\partial{\mathbf{\Sigma}^k}}{\partial{\beta^k}}=&~\mathbf{I}\circ\boldsymbol{\Theta}^k\\
\frac{\partial{\mathbf{U}^k}}{\partial{\beta^k}}=&~\mathbf{U}^k\left(\mathbf{G} \circ\left[\boldsymbol{\Theta}^k\mathbf{\Sigma}^k+\mathbf{\Sigma}^k (\boldsymbol{\Theta}^k)^\top\right]\right)\\
&+\left(\mathbf{I}-\mathbf{U}^k (\mathbf{U}^k)^{\top}\right) \frac{\partial\mathbf{\Omega}^k}{\partial\beta^k} \mathbf{V}^k (\mathbf{\Sigma}^k)^{-1} \\
\frac{\partial\mathbf{V}^k}{\partial\beta^k}=&~\mathbf{V}^k\left(\mathbf{G} \circ\left[\mathbf{\Sigma}^k \boldsymbol{\Theta}^k+(\boldsymbol{\Theta}^k)^\top\mathbf{\Sigma}^k\right]\right)\\
&+\left(\mathbf{I}-\mathbf{V}^k(\mathbf{V}^k)^{\top}\right) \frac{\partial(\mathbf{\Omega}^k)^{\top}}{\partial\beta^k}\mathbf{U}^k (\mathbf{\Sigma}^k) ^ { - 1 }
\end{split}
\end{equation}
where 
\begin{equation}
\boldsymbol{\Theta}^k=(\mathbf{U}^k)^\top\frac{\partial\mathbf{\Omega}^k}{\partial\beta^k}\mathbf{V}^k
=-\frac{1}{\beta_k^2}(\mathbf{U}^k)^\top\mathbf{\Lambda}^k\mathbf{V}^k,
\end{equation}
and entries of $\mathbf{G}\in\mathbb{R}^{n\times n}$ are given by
\begin{equation}
G_{ij}=\left \{ 
\begin{aligned}
& 1 / (\sigma_j^2-\sigma_i^2),~ i \neq j \\
& 0,~~~~~~~~~~~~~~ i=j
\end{aligned} \right .
\end{equation}

We then proceed to compute $\partial \mathbf{e}^{k+1} / \partial{\beta^k}$. According to the solution to problem \eqref{eq:e_update}, the derivative can be easily derived as
\begin{equation}
\frac{\partial \mathbf{e}^{k+1}}{\partial{\beta^k}}=\frac{1}{\beta^k+2}{(\mathbf{y} - \mathbf{e}^{k+1} - \bPhi\btheta^k)}.
\end{equation}

\begin{rem}
Since SVD used in computing $\partial \mathbf{Z}^{k+1} / \partial{\beta^k}$ has been obtained in the previous iteration \eqref{eq:svd} and the computation of $\partial \mathbf{e}^{k+1} / \partial{\beta^k}$ has a simple form, the computation of $\partial\Delta\mathcal{L}^k / \partial{\beta^k}$ only calls for minor extra computational cost as compared to the original ADMM procedure.
\end{rem}

\subsection{Implementation with Anderson acceleration}
Next, we describe the implementation of the self-adaptive penalty within the Anderson acceleration scheme. It follows from \eqref{eq:p_update}-\eqref{eq:w_update} that, $\btheta^{k+1}$ and $\bmu^{k+1}$ depend solely on $\btheta^k$ and $\bmu^k$ and are independent of $\mathbf{w}^k$. Thus, letting $\boldsymbol{\xi} = [\btheta^\top ~ \bmu^\top]^\top$, the above ADMM iteration can interpreted as the fixed-point iteration $\bxi^{k+1}=\mathcal{G}(\bxi^k)$, to which the Anderson acceleration can be applied \cite{zhang2019accelerating}. By keeping track of results in $m$ past iterations, the accelerated iteration for $\bxi^{k+1}$ is given by:
\begin{equation}
\label{eq:aa}
\bxi_\mathrm{AA}^{k+1} = \mathcal{G}(\bxi^{k}) - \sum_{j=1}^m{\alpha_j^{k}}^*\left[\mathcal{G}(\bxi^{k-j+1}) - \mathcal{G}(\bxi^{k-j})\right].
\end{equation}
Coefficients $({\alpha_1^{k}}^*,{\alpha_2^{k}}^*,\ldots,{\alpha_m^{k}}^*)$ are the solution to the following least-squares problem:
\begin{equation}
\label{eq:eta_least_squares}
\mathop{\min}_{(\alpha_1^k,\alpha_2^k,\ldots,\alpha_m^k)} \left\|{\boldsymbol{\eta}^{k} - \sum_{j=1}^m\alpha_j^k\left(\boldsymbol{\eta}^{k-j+1} - \boldsymbol{\eta}^{k-j}\right)}\right\|^2.
\end{equation}
where $\boldsymbol{\eta}^k=\mathcal{G}(\bxi^k)-\bxi^k$ denotes the residual in fixed-point iterations.

In addition, a fall-back strategy is necessary for the steady execution of Anderson acceleration. Combined residual is a commonly used criterion for convergence performance in ADMM, which is defined as:
\begin{equation}
\label{eq:combined_residual}
\varepsilon^{k+1}=\beta^k\|\boldsymbol{\varepsilon}_\mathrm{p}^{k+1}\|^2+\|\boldsymbol{\varepsilon}_\mathrm{d}^{k+1}\|^2/{\beta^k}.
\end{equation}

The entire ADMM procedure with Anderson acceleration is summarized in Algorithm 1. In each iteration, Anderson acceleration is attempted by utilizing $m>0$ past results, where $m$ is bounded by $m_{\rm max}$ to avoid negative impact of distant iterations \cite{zhang2019accelerating}. The efficacy of acceleration is then evaluated by the combined residual $\varepsilon$. If a smaller $\varepsilon$ is achieved, the accelerated outcome is then accepted. Otherwise, the algorithm backtracks to the last generic ADMM iteration.

\begin{rem}
The success of Anderson acceleration entails the convergence of ADMM itself \cite{zhang2019accelerating}. Since the self-adaptive updating strategy improves the convergence performance of ADMM, it may also improve the efficacy of Anderson acceleration.
\end{rem}

\begin{algorithm}[htbp]
\caption{Nonconvex ADMM for Rank-Constrained Model Identification with Self-Adaptive Penalty and Anderson Acceleration}
\begin{tabular}{ll}
\textbf{Input:}
& Initial $\mathbf{w}^0$, $\btheta^0$ and $\bmu^0$. Initial penalty $\beta^0$. \\
& Increase and decrease factors $\rho_\mathrm{inc}>\rho_\mathrm{dec}>1$.\\
& The maximum number of past iterations used\\ 
& for acceleration $m_{\rm max}\geq1$.\\
& Maximum iteration number $k_\mathrm{max}$.\\
& Stopping tolerance $\varepsilon_\mathrm{tol}$.
\end{tabular}
\begin{algorithmic}[1]
\State Set iteration count $k=0$.
\While {$\mathrm{TRUE}$}
\State // Run one iteration of ADMM
\State Update $\mathbf{w}^{k+1}$, $\btheta^{k+1}$, $\bmu^{k+1}$ using \eqref{eq:p_update}-\eqref{eq:w_update}.
\State // Compute the combined residual
\State Compute $\varepsilon$ using \eqref{eq:combined_residual}.
\If {$\mathrm{reset}==\mathrm{TRUE} \textbf{ or } \varepsilon < \varepsilon_\mathrm{prev}$} 
\State // Record the latest accepted iterate
\State $\bxi_\mathrm{rec}=\bxi^{k+1}.\  \varepsilon_\mathrm{prev} = \varepsilon.\ \mathrm{reset} = \mathrm{FALSE}.$
\State // Compute the accelerated iterate
\State $m=\min(m_{\rm max},k).$
\State Compute $\bxi_{AA}^{k+1}$ using \eqref{eq:aa}, \eqref{eq:eta_least_squares}.
\State Let $\bxi^{k+1}=\bxi_{AA}^{k+1}$.
\State Update $\beta^{k+1}$ using \eqref{eq:beta_update}.
\State $k=k+1.$
\Else \ // Backtrack to the last accepted record
\State Let $\bxi^{k}=\bxi_\mathrm{rec}.\ \mathrm{reset}=\mathrm{TRUE}.$
\EndIf
\If {$k>k_\mathrm{max} \textbf{ or } \varepsilon<\varepsilon_{\rm tol}$}
\State \Return $\btheta_\mathrm{rec}.$
\EndIf
\EndWhile
\end{algorithmic}
\end{algorithm}

\section{Numerical Examples}
We carry out a series of simulation experiments to illustrate the advantages of the proposed method, where the task is to identify the following continuous-time SOTD model:
\begin{equation}
G(s)=\frac{0.2s+1}{1.5s^2+0.6s+1}e^{-3s}.
\end{equation}
A relay feedback control lasting $50$s is imposed on the process to excite the system and generate input and output data for identification. The input and output data are sampled at an interval of $0.5$s, and Gaussian white noise with variance of $\sigma_N^2=0.01$ is added to the output. In each simulation, 100 data points are collected in each simulation, and then 100 Monte Carlo simulations are carried out in total based on which the average performance can be evaluated. The ADMM routines under different updating strategies, including constant penalty, the multiplicative updating and the proposed self-adaptive updating with/without Anderson acceleration, are executed to solve the RCP problem \eqref{eq:rcp0} and identify the model. The value of $\boldsymbol{\theta}$ is initialized using the kernel-based method in \cite{liu2023efficient}. 

\subsection{Constant $\beta$}
First, we investigate the performance of ADMM under constant $\beta$. We consider four different $\beta$ ($\beta=1,10,100,1000$). In each case, the average squared norms of primal residual $\boldsymbol{\varepsilon}_\mathrm{p}$ and dual residual $\boldsymbol{\varepsilon}_\mathrm{d}$ are recorded to evaluate the performance, as profiled in Fig. \ref{fig:constant}. It can be seen that when $\beta\leq 10$, ADMM shows poor performance due to the inadequate penalty, and with $\beta\geq 100$, a large dual residual is encountered. Henceforth, using a constant penalty can hardly achieve a balanced reduction between primal and dual residuals.
\begin{figure}[htbp]
\centering
\includegraphics[width=\linewidth]{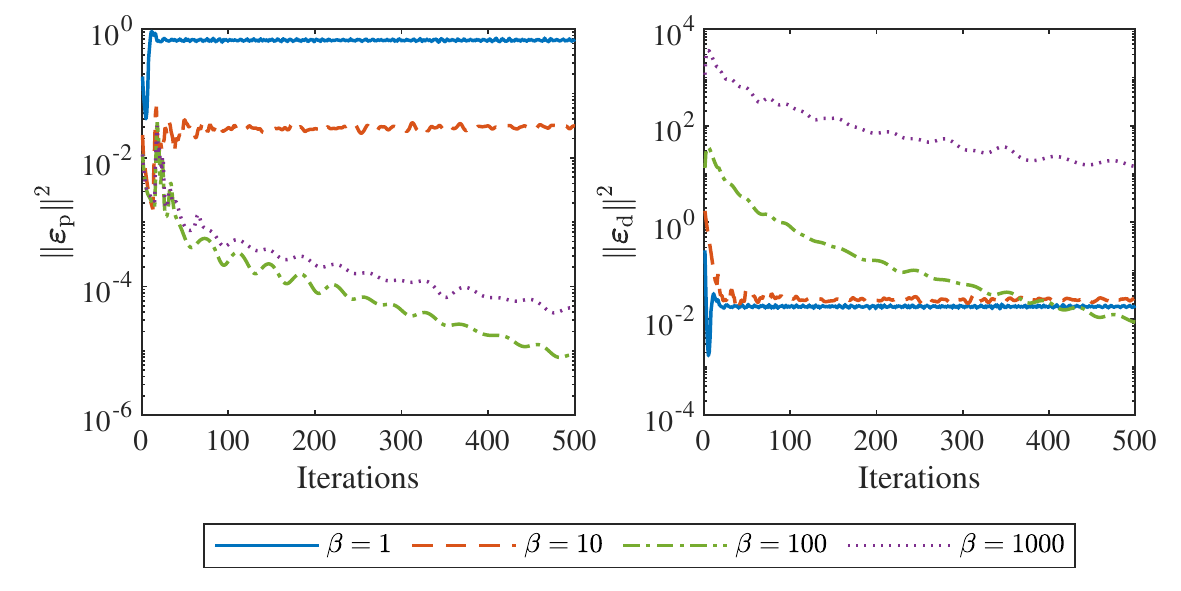}
\caption{The average squared norm of primal residual $\boldsymbol{\varepsilon}_\mathrm{p}$ and dual residual $\boldsymbol{\varepsilon}_\mathrm{d}$ in constant penalty in 100 Monte Carlo runs.}
\label{fig:constant}
\end{figure}

\subsection{Multiplicative updating strategy}
Then we investigate the performance of ADMM under multiplicative updating. We conduct two experiments. In the former one, the initial value of beta is uniformly set to $\beta^0=1$, and we consider different update strategies under different setups ($\rho=1.01,1.1$ and $\beta_\mathrm{max}=10,100,1000$). In the latter one, the update strategy is fixed as $\rho=1.01,\beta_\mathrm{max}=100$, and different initial values of $\beta$ is considered $(\beta^0=0.1,1,10,100)$. The average squared norms of primal residual and dual residual are illustrated in Fig. \ref{fig2}.\footnote{Note that in Fig. \ref{fig:multi1}, the curve under $\rho=1.01,\ \beta_\mathrm{max}=1000$ is dismissed because $1.01^{500}<1000$.} In Fig. \ref{fig:multi1}, the ADMM under multiplicative updating strategy exhibits a slow convergence with small $\rho$ and $\beta_\mathrm{max}$, e.g., $\rho=1.01,\beta_\mathrm{max}=10$, and leads to large dual residual with large ones. In other cases, even though the primal and dual residuals continually decrease in the early stage, the convergence gets bogged down after $\beta$ reaches the maximum. Fig. \ref{fig:multi2} once more shows that small $\beta^0$ gives rise to slow convergence or even divergence, while large $\beta^0$ yields a large dual residual. To sum up, the practical performance of ADMM is sensitive to the choice of hyper-parameters and it is difficult to concurrently and steadily suppress two residuals even with careful tuning.
\begin{figure}[htbp]
\centering
\subfigure[Performance under different $\rho$ and $\beta_\mathrm{max}$.]{\includegraphics[width=\linewidth]{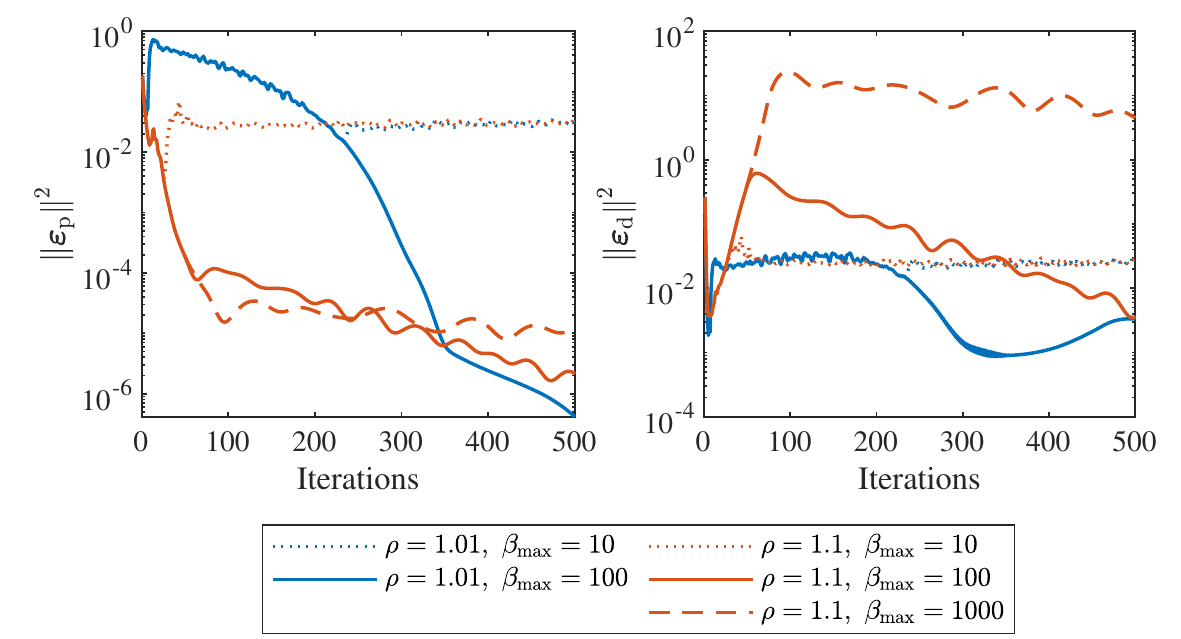}\label{fig:multi1}}\hfill
\subfigure[Performance under different $\beta_0$.]{\includegraphics[width=\linewidth]{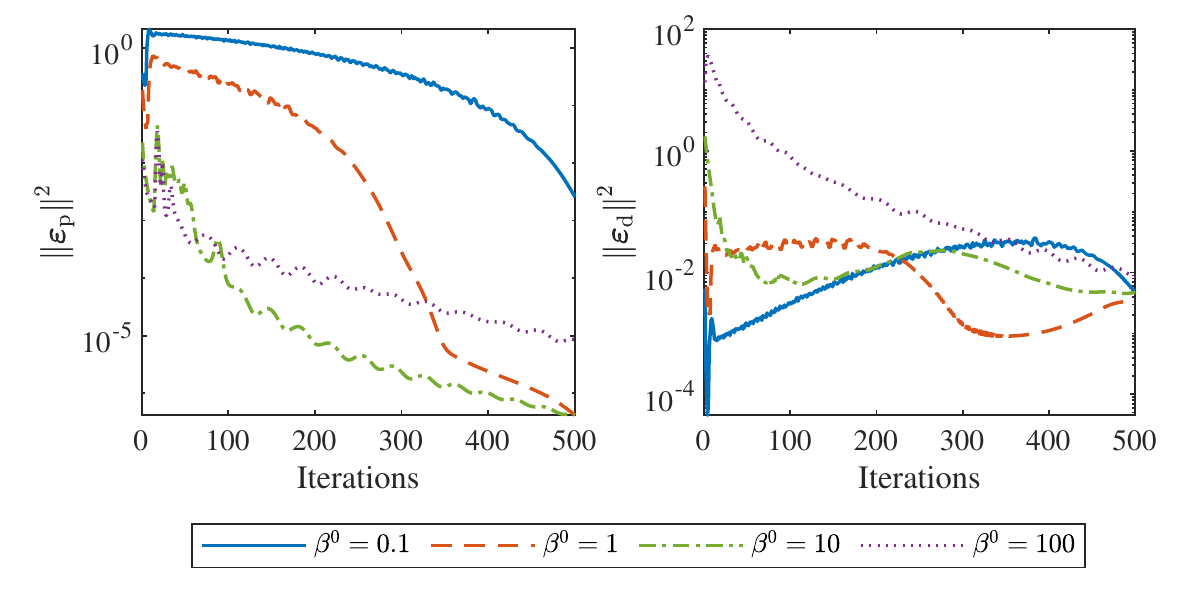}\label{fig:multi2}}
\caption{The average squared norm of primal residual $\boldsymbol{\varepsilon}_\mathrm{p}$ and dual residual $\boldsymbol{\varepsilon}_\mathrm{d}$ in multiplicative updating strategy in 100 Monte Carlo runs.}
\label{fig2}
\end{figure}

\subsection{Residual-based adaptive updating strategy}
Next, before delving into the proposed self-adaptive updating method, we investigate the performance of ADMM under the existing residual-based adaptive updating strategy mentioned in Remark \ref{rem1}, which endeavors to obtain equal primal residual and dual residual during iterations \cite{he2000alternating, boyd2011distributed}. Four different initial values of $\beta$ are considered ($\beta^0=0.1,1,10,100$). Other hyper-parameters in \eqref{eq:22} are set as $\kappa=10,\ \rho_\text{inc}=\rho_\text{dec}=1.02$. The average squared norms of primal residual and dual residual are profiled in Fig. \ref{fig:norm_adap}. It can be clearly seen that, under all $\beta^0$, the residual-based adaptive update strategy fails to converge, thereby revealing its incapability of handling nonconvex ADMM.
\begin{figure}[htbp]
\centering
\includegraphics[width=\linewidth]{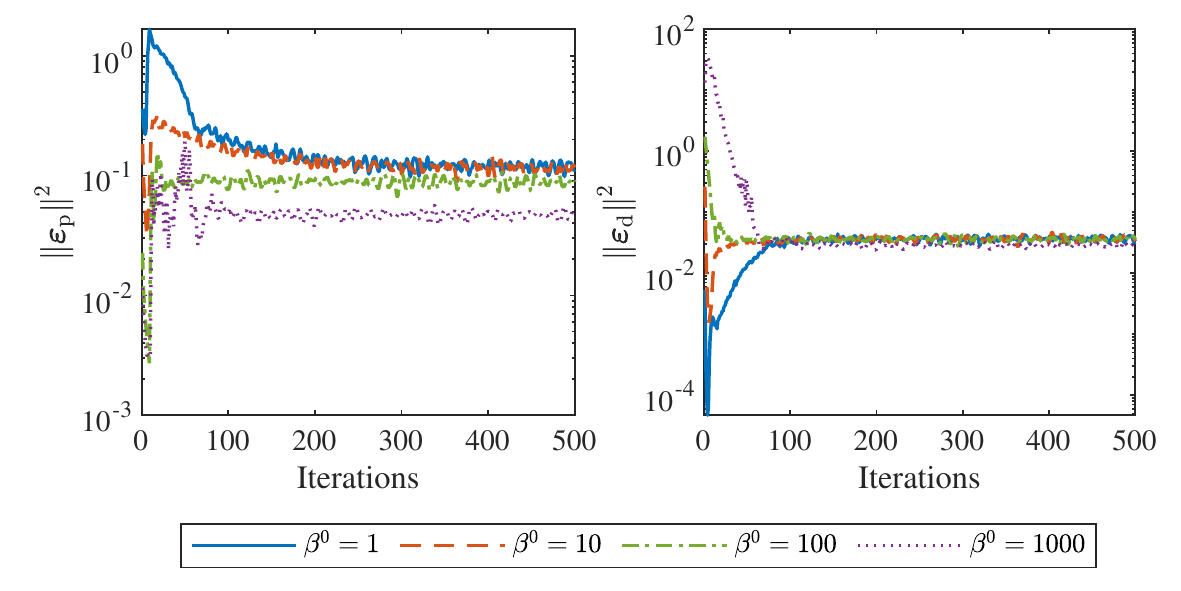}
\caption{The average squared norm of primal residual $\boldsymbol{\varepsilon}_\mathrm{p}$ and dual residual $\boldsymbol{\varepsilon}_\mathrm{d}$ in residual-based adaptive updating strategy in 100 Monte Carlo runs.}
\label{fig:norm_adap}
\end{figure}

\subsection{The proposed self-adaptive updating strategy}
Finally, we testify the performance of ADMM under the proposed self-adaptive updating strategy, which is compared with that under multiplicative updating strategy. For the proposed method, hyper-parameters are specified as $\rho_\mathrm{inc}=1.05$, $\rho_\mathrm{dec}=1.02$. For multiplicative updating strategy, the fine-tuned setup with $\rho=1.01$ and $\beta_\mathrm{max}=100$ is adopted. For both methods, we consider cases with and without Anderson acceleration, and $m$ is limited by $m_\mathrm{max}=5$. In all cases the initial penalty is set to $\beta^0=1$. The evolution of residuals is displayed in Fig. \ref{fig:compare}. It can be seen that self-adaptive penalty yields much better performance than multiplicative updating. On one hand, the self-adaptive updating results in significantly faster convergence in the early stage as it allows for using a large $\rho_\mathrm{inc}$. On the other hand, thanks to the adaptability of $\beta^k$, both primal and dual residuals can be stably reduced. Moreover, further improvement can be made by performing Anderson acceleration, which yields much smaller residuals under both strategies. When applied to the proposed method, it can also be observed that oscillation is effectively damped with Anderson acceleration.

\begin{figure}[htbp]
\centering
\includegraphics[width=\linewidth]{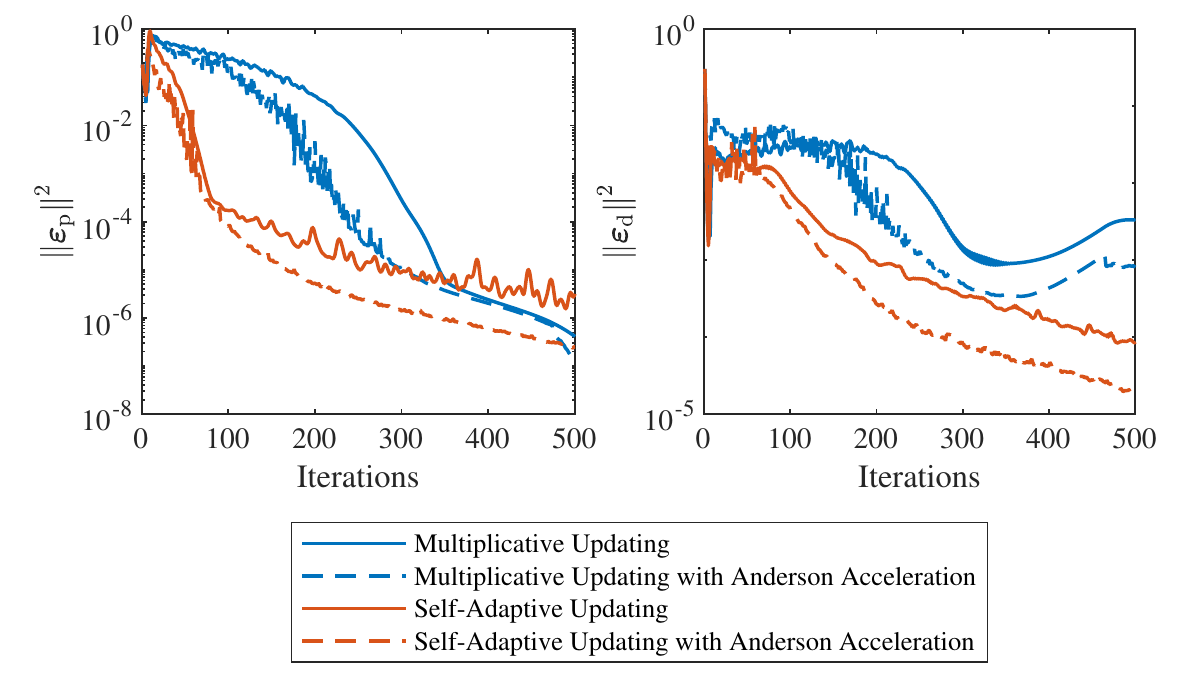}
\caption{The average squared norm of primal residual $\boldsymbol{\varepsilon}_\mathrm{p}$ and dual residual $\boldsymbol{\varepsilon}_\mathrm{d}$ in multiplicative updating and proposed self-adaptive updating strategies in 100 Monte Carlo runs.}
\label{fig:compare}
\end{figure}

We finally highlight that, as a merit of the proposed method, its adaptability enables to effectively accommodate different initial values of $\beta$. We testify the performance of ADMM under the proposed updating strategy and Anderson acceleration with $\beta^0=0.1,1,10,100$. Apart from $||\boldsymbol{\varepsilon}_\mathrm{p}||^2$ and $||\boldsymbol{\varepsilon}_\mathrm{d}||^2$, the evolution of $\beta$ is also recorded, as depicted in Fig. \ref{fig:adap}, where both primal and dual residuals can be effectively reduced and a steady convergence is attained. Most importantly, despite different initialization, the values of $\beta$ get close after iterations and eventually converge. This demonstrates that the proposed self-adaptive strategy is somewhat immune to initial values of $\beta$ and is thus advantageous over generic updating schemes in practical usage.

\begin{figure}[htbp]
\centering
\includegraphics[width=\linewidth]{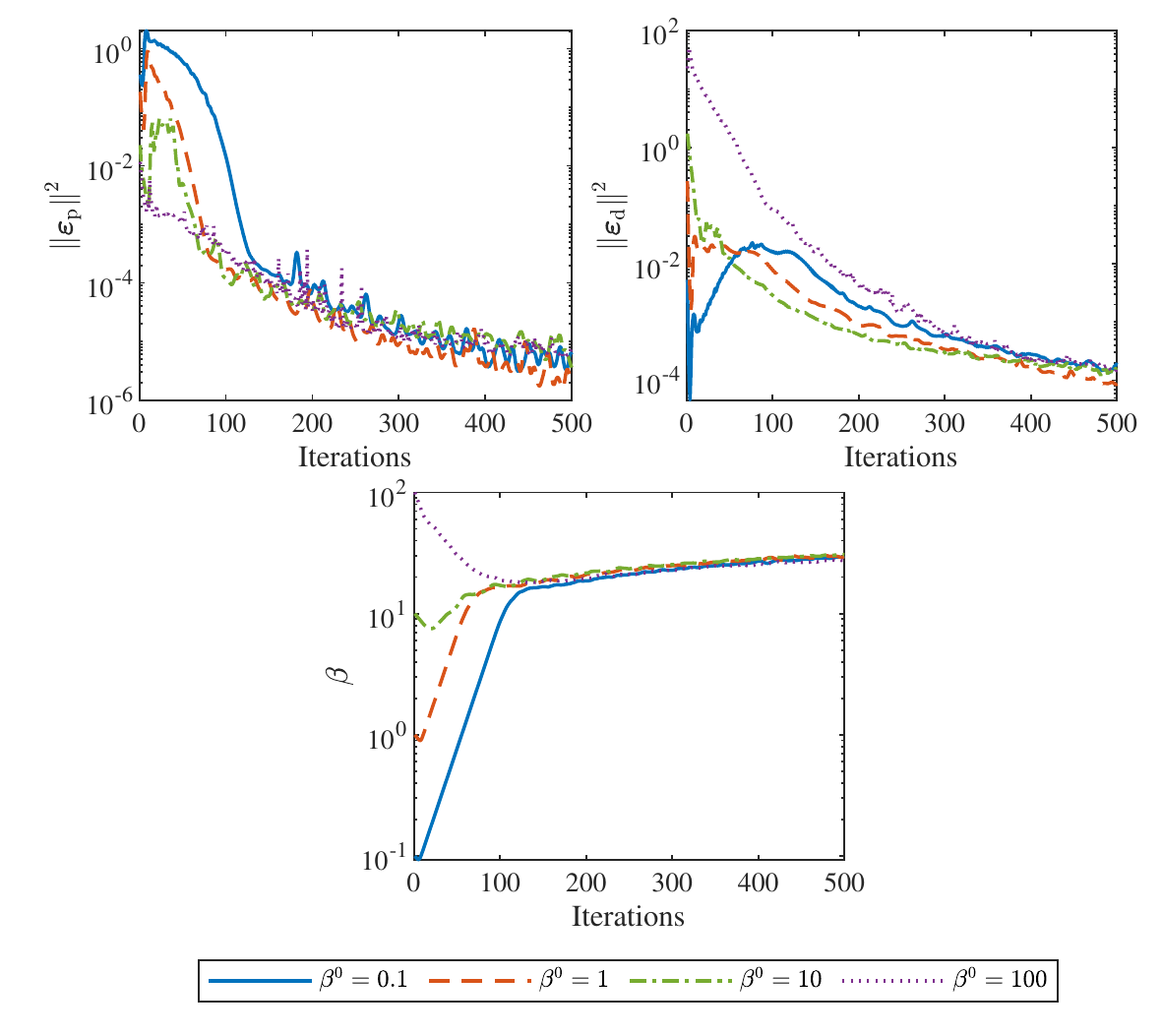}
\caption{The average squared norm of primal residual $\boldsymbol{\varepsilon}_\mathrm{p}$ and dual residual $\boldsymbol{\varepsilon}_\mathrm{d}$, and penalty parameter $\beta^k$ in the proposed self-adaptive updating strategy together with Anderson acceleration under different $\beta^0$ in 100 Monte Carlo runs.}
\label{fig:adap}
\end{figure}

\section{Conclusion}
In this work, a novel self-adaptive strategy for penalty updating was proposed to accelerate the nonconvex ADMM algorithm used in low-order model identification. In comparison with generic penalty updating strategy, the proposed method gives rise to a more balanced suppression of both primal and dual residuals, and thus exhibits better convergence performance. The implementation with Anderson acceleration was developed to attain further improvement. Numerical studies demonstrated the significant accelerations brought by the proposed method and its insensitivity to initial values. The theoretical convergence properties of the proposed method and its applications to more general nonconvex programs are worthy of future investigation.  

\addtolength{\textheight}{-12cm}   







\bibliographystyle{IEEEtran}
\bibliography{ref_bib}

\begin{thebibliography}{10}
\providecommand{\url}[1]{#1}
\csname url@samestyle\endcsname
\providecommand{\newblock}{\relax}
\providecommand{\bibinfo}[2]{#2}
\providecommand{\BIBentrySTDinterwordspacing}{\spaceskip=0pt\relax}
\providecommand{\BIBentryALTinterwordstretchfactor}{4}
\providecommand{\BIBentryALTinterwordspacing}{\spaceskip=\fontdimen2\font plus
\BIBentryALTinterwordstretchfactor\fontdimen3\font minus
  \fontdimen4\font\relax}
\providecommand{\BIBforeignlanguage}[2]{{%
\expandafter\ifx\csname l@#1\endcsname\relax
\typeout{** WARNING: IEEEtran.bst: No hyphenation pattern has been}%
\typeout{** loaded for the language `#1'. Using the pattern for}%
\typeout{** the default language instead.}%
\else
\language=\csname l@#1\endcsname
\fi
#2}}
\providecommand{\BIBdecl}{\relax}
\BIBdecl

\bibitem{grossmann2009system}
C.~Grossmann, C.~N. Jones, and M.~Morari, ``System identification with missing
  data via nuclear norm regularization,'' in \emph{2009 European Control
  Conference (ECC)}.\hskip 1em plus 0.5em minus 0.4em\relax IEEE, 2009, pp.
  448--453.

\bibitem{markovsky2013}
I.~Markovsky and K.~Usevich, ``Structured low-rank approximation with missing
  data,'' \emph{SIAM Journal on Matrix Analysis and Applications}, vol.~34,
  no.~2, pp. 814--830, 2013.

\bibitem{liu2013nuclear}
Z.~Liu, A.~Hansson, and L.~Vandenberghe, ``Nuclear norm system identification
  with missing inputs and outputs,'' \emph{Systems \& Control Letters},
  vol.~62, no.~8, pp. 605--612, 2013.

\bibitem{verhaegen2016n2sid}
M.~Verhaegen and A.~Hansson, ``{N2SID}: Nuclear norm subspace identification of
  innovation models,'' \emph{Automatica}, vol.~72, pp. 57--63, 2016.

\bibitem{liu2021efficient}
Q.~Liu, C.~Shang, and D.~Huang, ``Efficient low-order system identification
  from low-quality step response data with rank-constrained optimization,''
  \emph{Control Engineering Practice}, vol. 107, p. 104671, 2021.

\bibitem{liu2023efficient}
Q.~Liu, C.~Shang, T.~Liu, and D.~Huang, ``Efficient relay autotuner of
  industrial controllers via rank-constrained identification of low-order
  time-delay models,'' \emph{IEEE Transactions on Control Systems Technology},
  vol.~31, no.~4, pp. 1787--1802, 2023.

\bibitem{seborg2010process}
D.~E. Seborg, D.~A. Mellichamp, T.~F. Edgar, and F.~J. Doyle~III, \emph{Process
  Dynamics and Control}.\hskip 1em plus 0.5em minus 0.4em\relax John Wiley \&
  Sons, 2010.

\bibitem{fazel2001rank}
M.~Fazel, H.~Hindi, and S.~P. Boyd, ``A rank minimization heuristic with
  application to minimum order system approximation,'' in \emph{Proceedings of
  the 2001 American Control Conference}, vol.~6.\hskip 1em plus 0.5em minus
  0.4em\relax IEEE, 2001, pp. 4734--4739.

\bibitem{liu2010interior}
Z.~Liu and L.~Vandenberghe, ``Interior-point method for nuclear norm
  approximation with application to system identification,'' \emph{SIAM Journal
  on Matrix Analysis and Applications}, vol.~31, no.~3, pp. 1235--1256, 2010.

\bibitem{recht2010guaranteed}
B.~Recht, M.~Fazel, and P.~A. Parrilo, ``Guaranteed minimum-rank solutions of
  linear matrix equations via nuclear norm minimization,'' \emph{SIAM Review},
  vol.~52, no.~3, pp. 471--501, 2010.

\bibitem{mohan2010reweighted}
K.~Mohan and M.~Fazel, ``Reweighted nuclear norm minimization with application
  to system identification,'' in \emph{Proceedings of the 2010 American Control
  Conference}.\hskip 1em plus 0.5em minus 0.4em\relax IEEE, 2010, pp.
  2953--2959.

\bibitem{yu2018identification}
C.~Yu, L.~Ljung, and M.~Verhaegen, ``Identification of structured state-space
  models,'' \emph{Automatica}, vol.~90, pp. 54--61, 2018.

\bibitem{yu2019constrained}
C.~Yu, L.~Ljung, A.~Wills, and M.~Verhaegen, ``Constrained subspace method for
  the identification of structured state-space models ({COSMOS}),'' \emph{IEEE
  Transactions on Automatic Control}, vol.~65, no.~10, pp. 4201--4214, 2019.

\bibitem{delgado2014rank}
R.~A. Delgado, J.~C. Ag{\"u}ero, and G.~C. Goodwin, ``A rank-constrained
  optimization approach: Application to factor analysis,'' \emph{IFAC
  Proceedings Volumes}, vol.~47, no.~3, pp. 10\,373--10\,378, 2014.

\bibitem{sun2017rank}
C.~Sun and R.~Dai, ``Rank-constrained optimization and its applications,''
  \emph{Automatica}, vol.~82, pp. 128--136, 2017.

\bibitem{boyd2011distributed}
S.~Boyd, N.~Parikh, E.~Chu, B.~Peleato, J.~Eckstein \emph{et~al.},
  ``Distributed optimization and statistical learning via the alternating
  direction method of multipliers,'' \emph{Foundations and
  Trends{\textregistered} in Machine Learning}, vol.~3, no.~1, pp. 1--122,
  2011.

\bibitem{sun2017customized}
C.~Sun and R.~Dai, ``A customized {ADMM} for rank-constrained optimization
  problems with approximate formulations,'' in \emph{2017 IEEE 56th Annual
  Conference on Decision and Control (CDC)}.\hskip 1em plus 0.5em minus
  0.4em\relax IEEE, 2017, pp. 3769--3774.

\bibitem{wu2022maximum}
X.~Wu, Z.~Yang, P.~Stoica, and Z.~Xu, ``Maximum likelihood line spectral
  estimation in the signal domain: A rank-constrained structured matrix
  recovery approach,'' \emph{IEEE Transactions on Signal Processing}, vol.~70,
  pp. 4156--4169, 2022.

\bibitem{wang2019global}
Y.~Wang, W.~Yin, and J.~Zeng, ``Global convergence of {ADMM} in nonconvex
  nonsmooth optimization,'' \emph{Journal of Scientific Computing}, vol.~78,
  pp. 29--63, 2019.

\bibitem{zhang2019accelerating}
J.~Zhang, Y.~Peng, W.~Ouyang, and B.~Deng, ``Accelerating {ADMM} for efficient
  simulation and optimization,'' \emph{ACM Transactions on Graphics}, vol.~38,
  no.~6, pp. 1--21, 2019.

\bibitem{pillonetto2014kernel}
G.~Pillonetto, F.~Dinuzzo, T.~Chen, G.~De~Nicolao, and L.~Ljung, ``Kernel
  methods in system identification, machine learning and function estimation: A
  survey,'' \emph{Automatica}, vol.~50, no.~3, pp. 657--682, 2014.

\bibitem{liu2019linearized}
Q.~Liu, X.~Shen, and Y.~Gu, ``Linearized {ADMM} for nonconvex nonsmooth
  optimization with convergence analysis,'' \emph{IEEE Access}, vol.~7, pp.
  76\,131--76\,144, 2019.

\bibitem{chan2017plug}
S.~H. Chan, X.~Wang, and O.~A. Elgendy, ``Plug-and-play {ADMM} for image
  restoration: Fixed-point convergence and applications,'' \emph{IEEE
  Transactions on Computational Imaging}, vol.~3, no.~1, pp. 84--98, 2017.

\bibitem{sun2021two}
K.~Sun and X.~A. Sun, ``A two-level {ADMM} algorithm for {AC} {OPF} with global
  convergence guarantees,'' \emph{IEEE Transactions on Power Systems}, vol.~36,
  no.~6, pp. 5271--5281, 2021.

\bibitem{wang2021distributed}
M.~Wang, Y.~Su, L.~Chen, Z.~Li, and S.~Mei, ``Distributed optimal power flow of
  {DC} microgrids: A penalty based {ADMM} approach,'' \emph{CSEE Journal of
  Power and Energy Systems}, vol.~7, no.~2, pp. 339--347, 2021.

\bibitem{he2000alternating}
B.~He, H.~Yang, and S.~Wang, ``Alternating direction method with self-adaptive
  penalty parameters for monotone variational inequalities,'' \emph{Journal of
  Optimization Theory and Applications}, vol. 106, pp. 337--356, 2000.

\bibitem{wohlberg2017admm}
B.~Wohlberg, ``{ADMM} penalty parameter selection by residual balancing,''
  \emph{arXiv preprint arXiv:1704.06209}, 2017.

\bibitem{bodewig2014matrix}
E.~Bodewig, \emph{Matrix Calculus}.\hskip 1em plus 0.5em minus 0.4em\relax
  Elsevier, 2014.

\end{thebibliography}

\end{document}